\DeclareMathOperator{\GL}{GL}
\DeclareMathOperator{\SL}{SL}
\DeclareMathOperator{\Gal}{Gal}
\DeclareMathOperator{\rank}{rank}
\DeclareMathOperator{\im}{im}
\DeclareMathOperator{\Sel}{Sel}
\DeclareMathOperator{\D}{\mathcal{D}}
\title[Integral points on the congruent number curves]{The average number of integral points on the congruent number curves}
\author{Stephanie Chan}
\email{ytchan@umich.edu}
\address{Department of Mathematics, University of Michigan, 530 Church Street
Ann Arbor, MI 48109, USA}
\keywords{elliptic curve, quadratic twist, integral point}
\newcommand{\E}{\mathcal{E}}
\newcommand{\Q}{\mathbb{Q}}
\newcommand{\Z}{\mathbb{Z}}
\newcommand{\leg}[2]{\left(\frac{#1}{#2}\right)}
\newtheorem{theorem}{Theorem}[section]
\newtheorem{corollary}[theorem]{Corollary}
\newtheorem{lemma}[theorem]{Lemma}
\newtheorem{definition}[theorem]{Definition}
\newtheorem{conjecture}[theorem]{Conjecture}
\begin{document}
\begin{abstract}
We show that the total number of non-torsion integral points on the elliptic curves $\E_D:y^2=x^3-D^2x$, where $D$ ranges over positive squarefree integers less than $N$, is $O( N(\log N)^{-1/4+\epsilon})$. The proof involves a discriminant-lowering procedure on integral binary quartic forms and an application of Heath-Brown's method on estimating the average size of the $2$-Selmer group of the curves in this family. 
\end{abstract}

\maketitle

\section{Introduction}
Given an elliptic curve over $\Q$ with short Weierstrass model
\begin{equation}\label{eq:Emodel}
E: y^2=x^3+Ax+B,\ A,B\in\Z,\end{equation}
we study the quadratic twists of $E$, with the model
\begin{equation}\label{eq:twistmodel}
E_D:y^2=x^3+AD^2x+BD^3,\end{equation}
where $D$ denotes a positive squarefree integer. 
Consider the set of integral points
\[E_D(\Z)\coloneqq \left\{(x,y)\in\Z^2:y^2=x^3+AD^2x+BD^3\right\}.\]
It follows from a result of Mordell~\cite{MordellIntFinite} that $\#E_D(\Z)$ is always finite.

We are interested in the distribution of the number of integral points $\#E_D(\Z)$ in quadratic twist families, when $E_D$ are ordered according to the size of $D$. 
If $E(\Q)$ contains a $2$-torsion point, this point must have the form $(a,0)$ for some integer $a$ under the model~\eqref{eq:Emodel}, then $(aD,0)\in E_D(\Z)$ for all squarefree integers $D$. Therefore we call an integral point non-trivial if it is not a $2$-torsion point of $E_D(\Q)$. Define the set of non-trivial integral points on $E_D$ to be
\[ E_D^*(\Z)\coloneqq\{(x,y)\in E_D(\Z): y\neq 0\}.\]

Define
\begin{align*}
 & \D\coloneqq \{D\in\Z_{>0}:D\text{ squarefree}\},\\
 & \D_N\coloneqq \{D\in\D:D\leq N\}.
\end{align*}
Granville~\cite{Granvilletwists} conjectured that almost all curves within a quadratic twist family have no non-trivial integral point. We state the conjecture adapted to our model~\eqref{eq:twistmodel}.
\begin{conjecture}[Granville~\cite{Granvilletwists}]\label{conj:Granville}
 Fix $A,B\in\Z$ such that $4A^3+27B^2\neq 0$. Let $E_D: y^2=x^3+AD^2x+BD^3$, $D\in\mathcal{D}$. Then
 \begin{equation*}
 \#\{D\in\D_N: E_D^*(\Z)\neq \varnothing\} \sim C_{A,B}N ^{\frac{1}{2} },
 \end{equation*}
 where $C_{A,B}$ is a constant that depends only on $A,B$.
\end{conjecture}
We note that Granville's original conjecture considers a different model $Dy^2=f(x)$, where $f \in \Z[x]$ and $\deg f=3$.
When $f(x)=x^3+Ax+B$, any point $(x,y)\in\Z^2$ satisfying $Dy^2=f(x)$ corresponds to a point $(Dx,Dy)\in E_D(\Z)$, so there are fewer integral points using the model $Dy^2=f(x)$ when compared to our model~\eqref{eq:twistmodel}. The exponent $\frac{1}{2}$ stated in Conjecture~\ref{conj:Granville} replaces $\frac{1}{3}$ in the original conjecture because of this discrepancy. The exponent $\frac{1}{2}$ is suggested by some heuristics we gave in~\cite[p.~6677--6678]{Cintegral} for the family $y^2=x^3-D^2x$. 

In this direction, Matschke and Mudigonda~\cite{MM} handled the case when $f(x)$ is reducible, assuming the $abc$ conjecture.
\begin{theorem}[Matschke--Mudigonda~\cite{MM}]
 Assume that the $abc$ conjecture is true. Suppose $f(x)=x^3+Ax+B$, $A,B\in\Z$, such that $4A^3+27B^2\neq 0$ and $f(x)$ is reducible over $\Q$. Then
 \begin{equation*}
 \#\{D\in\D_N: Dy^2=f(x) \text{ for some }x,y\in\Z,\ y\neq 0\} \leq N^{\frac{2}{3} + o(1)}. 
 \end{equation*}
\end{theorem}

Our goal here is to gain progress towards Conjecture~\ref{conj:Granville} on a specific quadratic twist family.
We restrict our attention to the congruent number curve
$\E:y^2=x^3-x$,
and study its twists \[\E_D:y^2=x^3-D^2x.\] 
It is well known that the torsion subgroup of $\E_D(\Q)$ is $\{O,\ (0,0),\ (\pm D,0)\}\cong \Z/2\Z\times\Z/2\Z$ (see for example~\cite[Chapter~I, Proposition~17]{Koblitz}), where $O$ denotes the point at infinity. 

For this family $\E_D$, we can deduce from existing results that all moments of $\#\E_D(\Z)$ are finite. 
The $2$-Selmer groups of $\E_D$, which we denote by $\Sel_2(\E_D)$, is a finite group with order $2$ that is defined via local conditions and admits an injection $\E_D(\Q)/2\E_D(\Q)\hookrightarrow\Sel_2(\E_D)$ (see for example~\cite[Chapter~X]{Silverman}). In particular, the $2$-Selmer rank provides an upper bound to the rank $\rank(\E_D(\Q))$ of the Mordell--Weil group of $\E_D(\Q)$.
It is usually easier to compute the $2$-Selmer groups of elliptic curves with a torsion subgroup $\Z/2\Z\times\Z/2\Z$ over $\Q$, since then most of the work can be done over $\Q$. Heath-Brown~\cite[Theorem~1]{HBSelmer} computed all the moments of the size of the $2$-Selmer groups of $\E_D$. For any fixed positive integer $k$, he showed that
\begin{equation}\label{eq:HBmoments}
\lim_{N\rightarrow\infty}\frac{1}{\#\D_N}\sum_{D\in\D_N} (\#\Sel_2(\E_D))^{k}=c_k+o_k(1),
\end{equation}
where $c_k$ are explicit constants that can be bounded by $3^{k(k+1)}$.
Since the $2$-Selmer rank provides an upper bound to the the rank of $\E_D$,~\eqref{eq:HBmoments} implies that
\begin{equation}\label{eq:MWmoments}
\limsup_{N\rightarrow\infty}\frac{1}{\#\D_N}\sum_{D\in\D_N} 2^{k\cdot \rank \E_D(\Q)}\leq 3^{k(k+1)}.
\end{equation}
Lang~\cite[page 140]{LangDioph} conjectured that the number of integral points on a quasi-minimal Weierstrass equation of an elliptic curve $E$ should be bounded only in terms of $\rank E(\Q)$.
For the family $\E_D$, if follows from known results in this direction~\cite[Theorem~A]{SilvermanSie},~\cite[Theorem~0.7]{HS}, that there exists some absolute constant $C_1$, such that
\begin{equation}\label{eq:Cbound}
\# \E_D(\Z)\ll C_1^{\rank \E_D(\Q)}.
\end{equation} 
In~\cite{Cintegral}, we showed that $C_1$ in~\eqref{eq:Cbound} can be taken as $3.8$. 
Combining the upper bound~\eqref{eq:Cbound} and~\eqref{eq:MWmoments}, we can bound the $k$-th moment 
\begin{equation}\label{eq:intmoments}
\limsup_{N\rightarrow\infty}\frac{1}{\#\D_N}\sum_{D\in\D_N}(\#\E_D(\Z))^{k}\ll C_2^{k(k+1)},
\end{equation}
where $C_2$ is an absolute constant.

We will show that in fact the moments of $\#\E_D(\Z)$ should each tend to $0$. The following is our main result.
\begin{theorem}\label{theorem:main}
For any $\epsilon>0$, we have
\[
\sum_{D\in\D_N}\#\E_D^*(\Z)\ll N(\log N)^{-\frac{1}{4}+\epsilon}.
\]
\end{theorem}
This shows that the average size of $\#\E_D^*(\Z)$ tends to $0$ as $N$ tends to infinity, since $\#\D_N\sim \frac{6}{\pi^2}N$. 

Theorem~\ref{theorem:main} implies that 
\begin{equation}\label{eq:nointpt}
\#\{D\in\D_N:\E_D^*(\Z)\neq \varnothing\}\ll N(\log N)^{-\frac{1}{4}+\epsilon}.
\end{equation}
An application of Hölder's inequality using~\eqref{eq:intmoments} and~\eqref{eq:nointpt}, gives
\[\begin{split}
\sum_{D\in \D_N}(\#\E_D^*(\Z))^k
&\leq
\Big(\sum_{D\in \D_N}(\#\E_D^*(\Z))^{\frac{k}{\epsilon}}\Big)^{\epsilon}
\left(\#\{D\in\D_N:\E_D^*(\Z)\neq \varnothing\}\right)^{1-\epsilon}\\
&\ll NC_2^{(\frac{k}{\epsilon}+1)k}(\log N)^{(-\frac{1}{4}+\epsilon)(1-\epsilon)}.
\end{split}\]
Rescaling $\epsilon$ gives Corollary~\ref{cor:main}.

\begin{corollary}\label{cor:main}
For any $\epsilon>0$ and $k>0$, we have
\[
\sum_{D\in\D_N}(\#\E_D(\Z))^{k}\ll_{\epsilon,k} N(\log N)^{-\frac{1}{4}+\epsilon}.
\]
\end{corollary}

We now give an outline of the proof of Theorem~\ref{theorem:main}.
In Section~\ref{sec:quartics}, for each integral point $(x,y)\in \E_D(\Z)$, we use Mordell's correspondence~\cite[Chapter 25]{Mordell} to construct a corresponding integral binary quartic form $f$ that represents $1$ and has discriminant related to the discriminant of $E$. Then in Section~\ref{sec:reduce}, we show that by picking an auxiliary prime $p\mid D/\gcd(x,D)$, we can transform $f$ into an integral binary quartic form $F$ that represents $p$ and has discriminant lowered by a factor of $p^6$.
In Section~\ref{sec:Selmer}, we show that $\gcd(x,D)$ can be controlled by the image of $(x,y)$ in the $2$-Selmer group of $\E_D$ under the map
\[\E_D(\Z)\hookrightarrow \E_D(\Q)\twoheadrightarrow\E_D(\Q)/2\E_D(\Q)\hookrightarrow\Sel_2(\E_D).\] 
Then in Section~\ref{sec:genericSelmer} we extract some information about the distribution of $2$-Selmer elements from Heath-Brown's work~\cite{HBSelmer1,HBSelmer} to show that for almost all $D$, we are able to pick a prime $p$ of a suitable size. In particular, this $p$ is not too small, so that there are $o(N)$ many discriminants for the discriminant-lowered quartic $F$ to take. At the same time, this $p$ is not too large, so that each $\GL_2(\Z)$-equivalence class of $F$ can only be the image of finitely many integral points by applying bounds on the number of solutions to Thue inequalities.
In Section~\ref{section:nongen}, we use Hölder's theorem and~\eqref{eq:intmoments} to bound the contribution from the the exceptional curves to the number of integral points.
In Section~\ref{sec:count}, we proceed to count the set of those quartics $F$ that were discriminant-lowered by some suitable $p$. We make use of the fact that every integral binary quartic form is $\SL_2(\Z)$-equivalent to at least one reduced form with bounded seminvariants~\cite{Cremona}. Applying the syzygy satisfied by the seminvariants returns a set of integral points on twists of $\E$ with bounded height. Then Theorem~\ref{theorem:main} follows from an application of an upper bound by Le Boudec~\cite{LeBoudec}.

\section{Integer-matrix binary quartic forms}\label{sec:quartics}
We say that a binary quartic form is \emph{integer-matrix} if it has the form 
\[f(X,Y)=a_0X^4+4a_1X^3Y+6a_2X^2Y^2+4a_3XY^3+a_4Y^4,\qquad a_i\in\Z.\]
Given any integral binary quartic form $f$ and $(x_0,y_0)\in\Z^2$, define the action of \[\gamma=\begin{pmatrix}
a&b\\
c&d
\end{pmatrix}\in\GL_2(\Z)\] on the pair $(f,(x_0,y_0))$ by
\[\gamma\cdot (f(X,Y),(x_0,y_0))=(f((X,Y)\cdot \gamma),(x_0,y_0)\cdot\gamma^{-1} ),\]
where 
\[(X,Y)\cdot \gamma=(aX+cY,bX+dY).\]
This action preserves the value of $f(x_0,y_0)$.

We recall some facts about the seminvariants of quartic forms~\cite[Section~4.1.1]{Cremona}. For our convenience, we choose to scale the seminvariants differently than in~\cite{Cremona} , since we will only be dealing with integer-matrix binary forms.
The invariants of $f$ are
\begin{align*}
I=I(f)&=a_0a_4-4a_1a_3+3a_2^2,\text{ and }\\
J=J(f)&=a_0a_2a_4-a_0a_3^2-a_1^2a_4+2a_1a_2a_3-a_2^3.
\end{align*}
The discriminant of $f$ is 
\begin{align*}
\Delta (f)&\coloneqq I^3-27J^2\\
 & = a_0^3 a_4^3- 64 a_1^3a_3^3- 18 a_0^2 a_2^2 a_4^2 - 12 a_0^2a_1 a_3 a_4^2 - 6 a_0 a_1^2 a_3^2 a_4\\&\qquad
 - 180 a_0 a_1 a_2^2 a_3 a_4 
 + 81 a_0 a_2^4 a_4
 + 36 a_1^2a_2^2 a_3^2
 - 27 (a_0^2 a_3^4+a_1^4a_4^2) \\&\qquad
 + 54 a_2 (-a_2^2 + 2 a_1 a_3 + a_0 a_4) (a_4 a_1^2 + a_0 a_3^2)
\end{align*}
The seminvariants 
 attached to the form are $I$, $J$, $a=a(f)=a_0$,
\[H=H(f)=a_1^2-a_0a_2,\text{ and } R=R(f)=2a_1^3+a_0^2a_3-3a_0a_1a_2.\]
Comparing to the formulas in~\cite[Section~4.1.1]{Cremona}, here we have taken out a factor of $-48$ from their $H$, a factor of $32$ from their $R$, a factor of $12$ from their $I$, a factor $432$ from their $J$, and a factor of $256\cdot 27$ from their $\Delta$.
The seminvariants are related by the syzygy
\begin{equation}\label{eq:syzygy}
H^3-\frac{I}{4}a^2H-\frac{J}{4}a^3=\left(\frac{R}{2}\right)^2.
\end{equation}
Notice that $(H,\frac{1}{2}R)$ defines an integral point on a twist of the elliptic curve $y^2=x^3-\frac{I}{4}-\frac{J}{4}$.

\subsection{Mordell's correspondence}
For integers $A,B\in\Z$ such that $4A^3+27B^2\neq 0$,
define an elliptic curve over $\Q$ with affine integral Weierstrass model
\[E_{A,B}:y^2=x^3+Ax+B.\]
The discriminant of $E_{A,B}$ is given by
\[\Delta_{E_{A,B}}=-16(4A^3+27B^2).\]
For integers $c,d,z\in\Z$,
define an integer-matrix binary quartic form
\[f_{c,d,e}(X,Y)=X^4+6cX^2Y^2+8d+eY^4.\]

Define
\begin{align*}
 & \mathcal{A}\coloneqq\{(E_{A,B},(x_0,y_0)): A,B\in\Z,\ 4A^3+27B^2\neq 0,\ (x_0,y_0)\in E_{A,B}(\Z)\},\\
&\mathcal{B}\coloneqq\{f_{c,d,e}:c,d, e\in \Z,\ e\equiv c^2\bmod 4,\ \Delta(f)\neq 0\}.
\end{align*}
The following correspondence is given by Mordell~\cite[Chapter 25]{Mordell} (or see~\cite[Section~2.3]{AlpogeHo} for a modern interpretation).
\begin{theorem}[Mordell]\label{thm:mordell}
Fix an integer $L\neq 0$.
There is a bijection
\[\mathcal{A}\rightarrow\mathcal{B}\]
given by 
\[(E_{A,B},(x_0,y_0))\mapsto f,\]
where \[f(X,Y)=X^4-6x_0X^2Y^2+8y_0XY^3+(-4A-3x_0^2)Y^4.\]
Moreover, under this map, $\Delta(f)=\Delta_{E_{A,B}}$, $I(f)=-4A$ and $J(f)= -4B$.
\end{theorem}
The inverse map comes from the syzygy~\eqref{eq:syzygy} satisfied by the seminvariants, but we will only make use of the the direction from $\mathcal{A}$ to $\mathcal{B}$ in Theorem~\ref{thm:mordell}.

\section{Lowering the discriminant}\label{sec:reduce}
We now fix an elliptic curve 
$E:y^2=x^3+Ax+B,\ A,B\in\Z$
and consider its quadratic twists
$E_D:y^2=x^3+AD^2x+BD^3$, where $D\in\D$.
For each $P=(c,d)\in E_D(\Z)$, Theorem~\ref{thm:mordell} gives the binary quartic form 
\begin{equation}\label{eq:fpdef}
f_P(X,Y)\coloneqq X^4-6cX^2Y^2+8dXY^3+(-4AD^2-3c^2)Y^4,
\end{equation}
 which satisfies $\Delta(f_P)=\Delta_ED^6$, $I(f_P) = -4AD^2$ and $J(f_P) = -4BD^3$.

Denote the space of integer-matrix binary quartic forms by $V$. 
Define
\begin{multline}\label{eq:Psi}
\Psi:\bigcup_{D\in\D}\left\{(P,M,k)\in E_D(\Z)\times \Z_{>0}\times (\Z/M\Z)^{\times}: 
\begin{aligned}
& M\mid D,\ \gcd(6\cdot x(P),M)=1,\\
&k^2\equiv x(P)\bmod M
\end{aligned}
\right\}\\
\rightarrow 
(V\times \Z^2)/\GL_2(\Z)
\end{multline}
given by 
\[
(P,M,k)\mapsto (F,(1,0)),
\]
where \[F(X,Y)=\frac{1}{M^3}
f_P(MX+kY,Y).\]
We will show that $\Psi$ is well-defined in Lemma~\ref{lemma:reddisc} and injective in Lemma~\ref{lemma:injective}

In work of Bombieri and Schmidt~\cite{BomSch}, to bound the number of solutions to a Thue equation $F_1(X,Y)=h$, they transformed the integral binary form $F_1$ to a different form $F_2$, whose discriminant is raised by a factor of $h^6$, and so that there is a solution to $F_2(X,Y)=1$. Some applications of this idea can be found in~\cite{AB,akhtarilocalglobal}. Here we attempt to carry out the reverse process on the integral quartic forms $f_P$ to lower their discriminants.

\begin{lemma}\label{lemma:reddisc}
Let $P=(c,d)\in E_D(\Z)$ and take $f_P$ as defined in~\eqref{eq:fpdef}. 
Fix a positive squarefree integer $M$ dividing $D$ that is coprime to $6c$.
Then $c$ is a square modulo $M$, and for any integer $k$ such that $k^2=c\bmod M$, we have that
\[
F(X,Y)\coloneqq 
\frac{1}{M^3}\cdot
f_P\left((X,Y)\cdot \begin{pmatrix}
M & 0\\
k & 1
\end{pmatrix}\right)=
\frac{1}{M^3}\cdot
f_P(MX+kY,Y)
\]
is an integer-matrix binary quartic form. Moreover, we have
\begin{enumerate}
 \item $F(1,0)=M$, 
 \item $I(F) = -4A(D/M)^2$, $J(F) =-4B(D/M)^3$, and 
 \item$\Delta(F)=\Delta(f_P)/M^6$.
\end{enumerate}
\end{lemma}

\begin{proof}
Since $M$ is squarefree, $M$ is a product of distinct prime factors.
Since $(c,d)\in\E_D(\Z)$, we have $d^2=c^3-D^2c$. For each prime $p\mid M$, we see that $c^3\equiv d^2\bmod p$, so $(\frac{c}{p})=1$, therefore $c$ is a square modulo $M$.
Taking any integer $k$ such that $k^2\equiv c\bmod M$, by Hensel's lemma we can find a lift of $k$ such that $k\equiv K\bmod M$ and
\begin{equation}\label{eq:cmod}
c\equiv K^2\bmod M^3.
\end{equation}
It suffices to show that $F$ is an integer-matrix binary quartic form with this choice of $K$, since otherwise $k=K+uM$ for some integer $u$, and we can consider $F(X-uY,Y)$ instead.

Putting~\eqref{eq:cmod} into $d^2=c^3+AD^2c+BD^3$, we see that
\begin{equation}\label{eq:dmod}
d\equiv K^3+\frac{AD^2}{2K}\bmod M^3.
\end{equation}
By~\eqref{eq:cmod} and~\eqref{eq:dmod}, we see that the coefficients of 
\begin{multline*}
f_P(MX+kY,Y)
=M^4X^4+4M^3KX^3y+6M^2(K^2-c)X^2Y^2\\
+4M(K^3-3cK+2d)XY^3+(K^4-6cK^2+8dK-4AD^2-3c^2)Y^4
\end{multline*}
are all divisible by $M^3$.
Therefore $F$ is integer-matrix from the coefficients. The remaining properties are then a straightforward check from the definition of $F$.
\end{proof}

\begin{lemma}\label{lemma:injective}
The map $\Psi$ is injective.
\end{lemma}
\begin{proof}
The value of $F(1,0)$ determines $M$, and together with the discriminant of $F$, determines $D$.
In the following, fix some $D\in\D$ and some $M\mid D$ such that $\gcd(6,M)=1$.
Suppose $P,Q\in E_D(\Z)$ satisfy $\gcd(x(P),M)=\gcd(x(Q),M)=1$ and write $\Psi(P,M,k_P)=(F_P,(1,0))$ and $\Psi(P,M,k_Q)=(F_Q,(1,0))$.
Suppose $(F_P,(1,0))$ and $(F_Q,(1,0))$ are $\GL_2(\Z)$-equivalent, so $\gamma\cdot(F_P,(1,0))=(F_Q,(1,0))$ for some $\gamma \in\GL_2(\Z)$. Then $(1,0)\cdot \gamma^{-1}=(1,0)$ implies that we can write $\gamma=\begin{pmatrix}
1 & 0\\
u & 1\end{pmatrix}$ for some $u\in\Z$.
Recall that \[F_P(X,Y)=\frac{1}{M^3}\cdot
f_P\left((X,Y)\cdot \begin{pmatrix}
M & 0\\
k_P & 1
\end{pmatrix}\right)\]
and 
\[F_Q(X,Y)=\frac{1}{M^3}\cdot
f_Q\left((X,Y)\cdot \begin{pmatrix}
M & 0\\
k_Q & 1
\end{pmatrix}\right)\]
From $F_P((X,Y)\cdot \gamma)=F_Q(X,Y)$, we get
 \[
f_P\left((X,Y)\cdot \gamma\cdot \begin{pmatrix}
M & 0\\
k_P & 1
\end{pmatrix}\right)
=
f_Q\left((X,Y)\cdot \begin{pmatrix}
M & 0\\
k_Q & 1
\end{pmatrix}\right).\]
Then since
\[\begin{pmatrix}
M & 0\\
k_Q & 1\end{pmatrix}^{-1}\begin{pmatrix}
1 & 0\\
u & 1\end{pmatrix}\begin{pmatrix}
M & 0\\
k_P & 1\end{pmatrix}
=\begin{pmatrix}
1 & 0\\
uM+k_P-k_Q & 1\end{pmatrix},
\]
we have
\[f_P\left((X,Y)\cdot \begin{pmatrix}
1 & 0\\
uM+k_P-k_Q & 1\end{pmatrix}\right)=
f_Q(X,Y).
\]
The $X^3Y$-coefficients of $f_P$ and $f_Q$ are both $0$, it must be that $uM+k_P-k_Q=0$ and $f_P=f_Q$. Hence $P=Q$ and $k_P\equiv k_Q\bmod M$.
\end{proof}

Heath-Brown~\cite{HBSelmer1,HBSelmer} computed the moments of the size of the $2$-Selmer group modulo $2$-torsion in this family is~$3$. We will extract some information about the $2$-Selmer elements in this family from the argument in~\cite{HBSelmer1,HBSelmer}, in order to show that we can usually pick a suitable $M$ to apply Lemma~\ref{lemma:reddisc}.

\section{The $2$-Selmer group of $y^2=x^3-D^2x$}\label{sec:Selmer}
In the following sections we will specialise in the case when $A=-1$ and $B=0$, that is, the quadratic twists family
\[\E_D:y^2=x^3-D^2x.\]

Heath-Brown~\cite{HBSelmer1,HBSelmer} computed the moments of the size of the $2$-Selmer group modulo $2$-torsion in this family is~$3$. We will extract some information about the $2$-Selmer elements in this family from the argument in~\cite{HBSelmer1,HBSelmer}, in order to show that we can usually pick a suitable $M$ to apply Lemma~\ref{lemma:reddisc}.

The $2$-Selmer group of $\E_D$ is defined to be
\[\Sel_2(\E_D)\coloneqq\ker\left(H^1(\Gal(\overline{\Q}/\Q),\E_D[2])\rightarrow \prod_{p\text{ place of }\Q}H^1(\Gal(\overline{\Q}_p/\Q_p),\E_D)\right).\]
Since $\E_D$ has full $2$-torsion, there is an isomorphism $H^1(\Gal(\overline{\Q}/\Q),\E_D[2])\cong ((\Q^\times/(\Q^\times)^2)^2$, and it is possible to obtain explicit equations for the homogeneous spaces (See for example~\cite[Chapter~X, Proposition 1.4]{Silverman}). 
Explicit equations for homogeneous spaces for the curves $\E_D$ were found as part of Heath-Brown's argument~\cite[Section~2]{HBSelmer1}. As we will see, each $2$-Selmer element of $\E_D(\Q)$ corresponds to a system of two binary quadratic forms that is everywhere locally solvable. We will follow~\cite[Section~2]{HBSelmer1} to recover the equations. 

\begin{definition}
For $D\in \D$, define $\mathcal{W}_D$ to be the set of all $(D_1,D_2,D_3,D_4)\in\D^4$ such that
\begin{enumerate}
\item the system
\begin{equation}\label{eq:homeq}
D_1X^2+D_4W^2=D_2Y^2,\ D_1X^2-D_4W^2=D_3Z^2,
\end{equation}
 is everywhere locally solvable, and
\item $D_1D_2D_3D_4=D$.
\end{enumerate}
\end{definition}
Consider the injective homomorphism
\begin{equation}\label{eq:theta}
\theta:\E_D(\Q)/2\E_D(\Q)\rightarrow \Q^\times/(\Q^\times)^2\times \Q^\times/(\Q^\times)^2\times \Q^\times/(\Q^\times)^2
\end{equation}
given by 
$(x,y)\mapsto (x-D,x,x+D)$
at non-torsion points.
At torsion points $\theta(O)=(1,1,1)$, $\theta((0,0))=(-D,-1,D)$, $\theta((D,0))=(2,D,2D)$, $\theta((-D,0))=(-2D,-D,1)$.
\begin{lemma}\label{lemma:identifyS2}
The set $\mathcal{W}_D$ is in bijection to
\[
\begin{cases}
\Sel_2(\E_D)/\psi(\{O,(0,0),(\pm D,0)\}) &\text{if }D\text{ is odd},\\
\hfil \Sel_2(\E_D)/\psi(\{O,(0,0)\}) &\text{if }D\text{ is even},
\end{cases}
\]
where $\psi$ here denotes the natural map
\[\psi:\E_D(\Q)\twoheadrightarrow\E_D(\Q)/2\E_D(\Q)\hookrightarrow\Sel_2(\E_D).\] 
In particular, if 
the image of $(c,d)\in\E_D(\Z)$ under the map 
\[\E_D(\Z)\hookrightarrow \E_D(\Q)\twoheadrightarrow\E_D(\Q)/2\E_D(\Q)\hookrightarrow\Sel_2(\E_D)\twoheadrightarrow\mathcal{W}_D\]
is $(D_1,D_2,D_3,D_4)$, then 
\begin{equation}\label{eq:gcdcD}
 \gcd(c,D)\in\{D_1D_2D_3,\ D_2D_3D_4,\ D_1D_2D_4, \ D_1D_3D_4\}.
\end{equation}
\end{lemma}
\begin{proof}
We study $\im\theta$ following~\cite[Section~2]{HBSelmer1}.
Suppose $(x,y)\in \E_D(\Q)$, and write $x=r/s$ and $y=t/u$, where $r,s,t,u$ are integers and $\gcd(r,s)=1$ and $\gcd(t,u)$=1.
Putting this into $y^2=x^3-D^2x$, 
we have 
\[r(r+sD)(r-sD)u^2=t^2s^3.\]
Then since $\gcd(t,u)=1$ and $\gcd(r,s)=1$, we must have $s^3=u^2$, so $s=W^2$ for some integer $W$.
Now write $\gcd(r,D)=D_0$, and $r=D_0r'$. From
\[r(r+sD)(r-sD)=t^2\]
we see that $D_0^3\mid t^2$, hence $D_0^2\mid t$ since $D_0$ is squarefree.
Then writing $D_4=D/D_0$, we have $\gcd(r',sD_4)=1$ by construction, and we have
\[r'(r'+sD_4)(r'-sD_4)=D_0(t/D_0^2)^2.\]
The factors on the left are pairwise coprime except possibly a common factor of $2$, which only occurs when $r'$ and $sD_4$ are both odd, in this case $r',(r'+sD_4)/2,(r'-sD_4)/2$ are pairwise coprime.
Now we can write $r'=D_1X^2$, $r'+sD_4=D_2Y^2$ and $r'-sD_4=D_3Z^2$, where $D_1,D_2,D_3$ are squarefree integers such that $D_1D_2D_3=D_0$ or $D_1D_2D_3=4D_0$. In the first case $D_1D_2D_3D_4=D$ and in the second case $D_1D_2D_3D_4=4D$ with $D_1,D_4$ odd and $D_2,D_3$ even.
When $D$ is even, the case $D_1D_2D_3D_4=4D$ is not possible since $D_1,D_4$ would need to be odd, and $D_2,D_3$ would need to be even and squarefree.
This produces a solution to the system
\[
D_1X^2+D_4W^2=D_2Y^2,\ D_1X^2-D_4W^2=D_3Z^2.
\]
An element in $\Sel_2(\E_D)$ corresponds to a system of this form
that is everywhere locally solvable. To fix the signs of $D_1,D_2,D_3,D_4$ and their valuations at $2$, we will use the torsion points of $\E_D$.

If $D$ is odd, exactly one of $(x',y')\in\{(x,y), (x,y)+(0,0), (x,y)+(D,0), (x,y)+(-D,0)\}$ satisfies $x'>0$ and $v_2(x')\neq 0$. By looking at $\theta((x',y'))$, we see that the image of $(x,y)$ in $\mathcal{W}_D$ can be taken as
\[
\begin{cases}
\hfil (D_1,D_2,D_3,D_4) &\text{if }v_2(x)\neq 0\text{ and }x>0,\\
\hfil (D_4,D_3,-D_2,-D_1) &\text{if }v_2(x)\neq 0\text{ and }x<0,\\
\hfil (D_2/2,D_1,D_4,D_3/2) &\text{if }v_2(x)= 0\text{ and }x>0,\\
(D_3/2,D_4,-D_1,-D_2/2) &\text{if }v_2(x)= 0\text{ and }x<0.\\
\end{cases}
\]
If $D$ is even, exactly one of $(x',y')\in\{(x,y), (x,y)+(0,0)\}$ satisfies $x'>0$ and $v_2(x')\equiv 1\bmod 2$. We take the image of $(x,y)$ in $\mathcal{W}_D$ to be 
\[
\begin{cases}
\hfil(D_1,D_2,D_3,D_4) &\text{if }\text{ and }x>0,\\
(D_4,D_3,-D_2,-D_1) &\text{if }\text{ and }x<0.
\end{cases}
\]

If $(x,y)\in\E_D(\Z)$, by construction $\gcd(x,D)=D_0$, which is $D_1D_2D_3$ or $\frac{1}{4}D_1D_2D_3$. This gives~\eqref{eq:gcdcD} by checking each of the above cases and using the different labelling of $D_1,D_2,D_3,D_4$ in the statement of the lemma.
\end{proof}


\section{Generic $2$-Selmer elements}\label{sec:genericSelmer}
We want to show that those $(D_1,D_2,D_3,D_4)$ that appears usually satisfies some nice properties that will eventually allow us to pick $M$ for Lemma~\ref{lemma:reddisc}.
Take 
\[N^{\ddagger}=\exp(4\kappa(\log\log N)^2),\] where $\kappa>0$ is an absolute constant as defined in~\cite[Lemma~7]{HBSelmer1} or~\cite[Section~3]{HBSelmer}. This $N^{\ddagger}$ is $C^4$ in the notation of~\cite{HBSelmer1}.

Henceforth $0<\epsilon<\frac{1}{2}$ will be a fixed constant. 
Let $S$ be the interval
\[S\coloneqq\left[\exp((\log N)^{2\epsilon}),\ \exp((\log N)^{1-2\epsilon})\right].\]
so that any $p\in S$ satisfies 
\[2\epsilon\log\log N<\log\log p<(1-2\epsilon)\log\log N.\]
Define
\begin{align*}
\omega(n)&\coloneqq\#\{p\text{ prime}:p\mid n\},\\
 \omega_S(n)&\coloneqq\#\{p\text{ prime}:p\mid n,\ p\in S\}.
\end{align*}

We will prove the following.
\begin{theorem}\label{thm:cond}
Define two properties on $(D_1,D_2,D_3,D_4)\in\mathcal{W}_D$:
\begin{enumerate}[label=(\textbf{S\arabic*})]
\item \label{cond1}for each $i=1,2,3,4$, we have $D_i\geq N^{\ddagger}
$ and there exist some $p\mid D_i$ such that $p\in S$;
\item \label{cond2}$(D_1,D_2,D_3,D_4)$ corresponds to a torsion point on $\E_D(\Q)$.
\end{enumerate}
Then
\[
 \#\left\{D\in\D_N:\ref{cond1}\text{ or }\ref{cond2}\text{ fails for some }
 (D_1,D_2,D_3,D_4)\in\mathcal{W}_D\right\}\ll N(\log N)^{-\frac{1}{4}+\epsilon}.
\]
\end{theorem}

Define \[\delta_i(\eta)\coloneqq\begin{cases}
1 &\text{if } \eta=i,\\
0 &\text{otherwise}.
\end{cases}
\]
To prove Theorem~\ref{thm:cond}, it suffices to bound the number of $4$-tuples of positive odd integers $(D_1,D_2,D_3,D_4)$ satisfying the following conditions:
\begin{enumerate}
\item $(2^{\delta_1(\eta)}D_1,2^{\delta_2(\eta)}D_2,2^{\delta_3(\eta)}D_3,2^{\delta_4(\eta)}D_4)\in\mathcal{W}_D$ for some $D\in\D_N$ and some $\eta\in\{0,1,2,3,4\}$
, and 
\item one of the conditions~\ref{cond1w} and~\ref{cond2w} listed below.
\end{enumerate}
\begin{enumerate}[label=(\textbf{W\arabic*})]
 \item \label{cond1w} For some $i=1,2,3,4$, we have $D_{i}<N^{\ddagger}$, and \[(D_1,D_2,D_3,D_4)\neq 
 \begin{cases}
 \hfil (1,1,1,D) &\text{if }\eta=0\text{ or }4,\\
 (1,1,D/2,1)&\text{if }\eta=2.
 \end{cases}\]
\item \label{cond2w} We have $D_{i}
\geq N^{\ddagger}$ for all $i=1,2,3,4$, and there exists an $i$ such that $D_i$ has no prime factor in $S$.
\end{enumerate}
In the above notation, $\eta=0$ implies that $D=D_1D_2D_3D_4$ is odd, and $\eta=1,2,3,4$ implies that $D=2D_1D_2D_3D_4$ is even.

We first consider the case when $D$ is odd.
By~\cite[Lemma~3]{HBSelmer1}, considering the local solvability of~\eqref{eq:homeq} at each prime, we can package the condition $(D_1,D_2,D_3,D_4)\in \mathcal{W}_D$ as a sum of product of Jacobi symbols.
Write $\mathbf{D}=(D_{ij})$ as a $16$-tuple of positive odd integers, where the indices are in the range
\[1\leq i\leq 4,\ 0\leq j\leq 4,\ i\neq j.\]
For odd $D$, set
\[g_0(\mathbf{D})=\leg{-1}{\alpha}\leg{2}{\beta_0}\prod_i 4^{-\omega(D_{i0})}\prod_{j\neq 0}4^{-\omega(D_{ij})}\prod_{k\neq i,j}\prod_{l}\leg{D_{kl}}{D_{ij}},\]
where
$\alpha = D_{12}D_{14}D_{23}D_{21}$ and $\beta_0= D_{24}D_{21}D_{34}D_{31}$.
Then
\[G_0(D_1,D_2,D_3,D_4)\coloneqq \sum_{\mathbf{D}: \prod_{j\neq i} D_{ij}=D_i}g_{\eta}(\mathbf{D})
=\begin{cases}
1&\text{if }(D_1,D_2,D_3,D_4)\in \mathcal{W}_D\\
0&\text{else.}
\end{cases}
\]

For even $D$, since we are only aiming for an upper bound, we can ignore any local conditions at $2$ when considering the solvability of~\eqref{eq:homeq}. Following the proof of~\cite[Lemma~3]{HBSelmer1}, the only difference being essentially pulling the factor of $2$ in $D$ into the $\leg{2}{\cdot}$ term, set
\[g_{\eta}(\mathbf{D})=\leg{-1}{\alpha}\leg{2}{\beta_{\eta}}
\prod_i 4^{-\omega(D_{i0})}\prod_{j\neq 0}4^{-\omega(D_{ij})}\prod_{k\neq i,j}\prod_{l}\leg{D_{kl}}{D_{ij}},\]
where 
\begin{align*}
&\beta_1=D_{23}D_{32}D_{42}D_{43}D_{21}D_{31}, \qquad 
\beta_2=D_{13}D_{14}D_{41}D_{43}D_{24}D_{21}, \\
&\beta_3=D_{12}D_{14}D_{41}D_{42}D_{34}D_{31}, \qquad 
\beta_4=D_{12}D_{13}D_{23}D_{32}D_{24}D_{34}.
\end{align*}
Then 
\[\begin{split}
G_{\eta}(D_1,D_2,D_3,D_4)
&\coloneqq \sum_{\mathbf{D}: \prod_{j\neq i} D_{ij}=D_i}g_{\eta}(\mathbf{D})\\
&\geq \begin{cases}
1&\text{if }(2^{\delta_1(\eta)}D_1,2^{\delta_2(\eta)}D_2,2^{\delta_3(\eta)}D_3,2^{\delta_4(\eta)}D_4)\in \mathcal{W}_{D}\\
0&\text{else.}
\end{cases}
\end{split}\]

\subsection{The case $D_i<N^{\ddagger}$}
For each $\eta=0,1,2,3,4$, we will want to estimate the sum 
\[
\sum_{\substack{(D_1,D_2,D_3,D_4)\\~\ref{cond1w}}}G_{\eta}(D_1,D_2,D_3,D_4)\]
where the sum is taken over all positive odd integers $D_1,D_2,D_3,D_4$ such that $D_1D_2D_3D_4\in \D_N$.
 Following~\cite[Section~3]{HBSelmer1} and~\cite[Section~3]{HBSelmer}, dissect the sum according to the size of each $D_{ij}$ in the factorisation.
For each $(i,j)$, take $A_{ij}$ to run over powers of $2$.
For $\mathbf{A}=(A_{ij})$, define the restricted sum
\[|S_{\eta}(\mathbf{A})|=\sum_{\substack{\mathbf{D}\\A_{ij}< D_{ij}\leq 2 A_{ij}}}g_{\eta}(\mathbf{D}),\]
where the sum is taken over all 16-tuples of odd positive integers $\mathbf{D}=(D_{ij})$ such that $\prod_{i,j} D_{ij}\in \D_N$ and $A_{ij}< D_{ij}\leq 2 A_{ij}$ for every $i,j$.
Note that if $A_{ij}=\frac{1}{2}$, the interval $A_{ij}< D_{ij}\leq 2 A_{ij}$ forces $D_{ij}=1$.

We can summarise the error term estimates in~\cite{HBSelmer1,HBSelmer} as follows.
\begin{lemma}[{\cite[Lemma~7, Lemma~11]{HBSelmer1},~\cite[Lemma~6]{HBSelmer}}]\label{lemma:HBsum}
\[\sum_{\mathbf{A}}|S_{\eta}(\mathbf{A})|\ll N(\log N)^{\frac{1}{4}+\epsilon},\]
where the sum is over all $\mathbf{A}$ other than those that satisfy
\begin{equation}\label{eq:Amaxunlinked}
\begin{cases}
 A_{\mathbf{u}}>N^{\ddagger}&\text{if }\mathbf{u}\in \mathcal{U}\text{, and}\\
 \hfil A_{\mathbf{u}}=\frac{1}{2}&\text{if }\mathbf{u}\notin \mathcal{U},
\end{cases}
\end{equation}
 for all indices $\mathbf{u}$, where $\mathcal{U}$ is one of
\begin{equation}\label{eq:Usets}
 \begin{cases}
 \{10, 20, 30, 40\}, \{ 40, 41, 42, 43\}, \{20, 12, 32, 42\}, \{30, 13, 23, 43\}&\text{if }\eta=0,\\
\{10,20,30,40\},\{40,14,24,34\} &\text{if }\eta=1,\\
\{10,20,30,40\},\{20,12,22,32\},\{30,31,32,34\} &\text{if }\eta=2,\\
\{10,20,30,40\},\{30,13,23,33\} &\text{if }\eta=3,\\
\{10,20,30,40\},\{10,11,21,31\},\{40,41,42,43\} &\text{if }\eta=4.
\end{cases}
\end{equation}
Moreover, the estimate still holds if we further impose that $\prod_j D_{ij}\leq N_i$ for each $i=1,2,3,4$.
\end{lemma}
Strictly speaking~\cite{HBSelmer1} only applies to the case for odd $D$, namely $\eta=0$.
The modification made in Lemma~\ref{lemma:HBsum} for even $D$ was in the sets in~\eqref{eq:Usets}, since the possible $\mathcal{U}$ depends on the variables in $\beta_{\eta}$.
Following the case analysis in the proof of~\cite[Lemma~11]{HBSelmer1} with this change, the sets in~\eqref{eq:Usets} can be found for each of $\eta=1,2,3,4$.

We are ready to bound the contribution from~\ref{cond1w}.
\begin{lemma}\label{lemma:w1} For each $\eta\in\{0,1,2,3,4\}$, we have
\[ \#\{ (2^{\delta_1(\eta)}D_1,2^{\delta_2(\eta)}D_2,2^{\delta_3(\eta)}D_3,2^{\delta_4(\eta)}D_4)\in\mathcal{W}_D:\ref{cond1w}\text{ holds}
 \} \ll N(\log N)^{-\frac{1}{4}+\epsilon}.\]
\end{lemma}
\begin{proof}
We want to bound
\begin{equation}\label{eq:oddG}
\sum_{\substack{(D_1,D_2,D_3,D_4)\\~\ref{cond1w}}}G_{\eta}(D_1,D_2,D_3,D_4)=\sum_{\substack{(D_1,D_2,D_3,D_4)\\~\ref{cond1w}}}\sum_{\substack{\mathbf{D}\\ \prod_{j\neq i} D_{ij}=D_i}}g_{\eta}(\mathbf{D}).
\end{equation}
Dissect the sum using $\mathbf{A}$, then apply Lemma~\ref{lemma:HBsum} to~\eqref{eq:oddG}.
The only possibility that this $S_{\eta}(\mathbf{A})$ is not covered by Lemma~\ref{lemma:HBsum} is if $\mathbf{A}$ satisfies~\eqref{eq:Amaxunlinked} with \[\mathcal{U}=\begin{cases}
\{40,41,42,43\}&\text{when }\eta=0,\\
\{30,31,32,34\}&\text{when }\eta=2,\\
\{40,41,42,43\}&\text{when }\eta=4.
\end{cases}
\] 
This implies that 
\[(D_1,D_2,D_3,D_4)=
 \begin{cases}
 \hfil (1,1,1,D) &\text{if }\eta=0\text{ or }4,\\
 (1,1,D/2,1)&\text{if }\eta=2,
 \end{cases}\] which are from the excluded torsion points.
Therefore the required estimate follows from that in Lemma~\ref{lemma:HBsum}.
\end{proof}

\subsection{Prime divisor of a large $D_i$}
We now bound the contribution from~\ref{cond2w}.
Assume that it is $D_4\geq N^{\ddagger}$ that is not divisible by any prime in $S$. The cases with $D_4$ replaced by $D_1$, $D_2$, $D_3$ the same after relabelling.
We want to bound
\begin{equation}\label{eq:expectedprime}
\sum_{\substack{D_1D_2D_3D_4\in\D_N\\D_1,D_2,D_3,D_4\geq N^{\ddagger}\\ p\mid D_4 \Rightarrow p\notin S}}G_{\eta}(D_1,D_2,D_3,D_4)
=\sum_{\substack{D_1D_2D_3D_4\in\D_N\\D_1,D_2,D_3,D_4\geq N^{\ddagger}\\ p\mid D_4 \Rightarrow p\notin S}}\sum_{\substack{\mathbf{D}\\ \prod_{j\neq i} D_{ij}=D_i}}g_{\eta}(\mathbf{D}).
\end{equation}
For each $\mathbf{D}$ that appears in the sum of $g_{\eta}(\mathbf{D})$, we can find a set of indices $\mathcal{U}=\{1i,2j,3k,4l\}$, where $i,j,k,l$ are not necessarily distinct, such that $D_{1i}$, $D_{2j}$, $D_{3k}$, $D_{4l}>(N^{\ddagger})^{\frac{1}{4}}$. 

Lemma~\ref{lemma:HBsum} do not immediately apply because of the restrictions on the primes dividing $D_4$. However it will be sufficient to follow~\cite[Section~3]{HBSelmer1}.

\begin{definition}
We call two indices $(i,j)$ and $(k,l)$ linked if 
\[i\neq k\text{ and
precisely one of the conditions }
\begin{cases}
l\neq 0,i,\\
j\neq 0, k
\end{cases} \text{ holds.}\]
\end{definition}
We will show that the indices in $\mathcal{U}$ are pairwise unlinked with the number of exceptions contributing $O(N(\log N)^{-\frac{1}{4}+\epsilon})$ to~\eqref{eq:expectedprime}.

When there are linked indices $\mathbf{u}$ and $\mathbf{v}$, the Jacobi symbol between $D_{\mathbf{u}}$ and $D_{\mathbf{v}}$ appears in $g_{\eta}$ non-trivially. When both $D_{\mathbf{u}}$ and $D_{\mathbf{v}}$ are large, then the large sieve (\cite[Lemma~4]{HBSelmer1}) can be applied to obtain cancellation. Imposing further restrictions to each of $D_{\mathbf{u}}$, $D_{\mathbf{v}}$, $(D_{\mathbf{w}})_{\mathbf{w}\neq \mathbf{u},\mathbf{v}}$ do not affect the estimates in~\cite[Section~3]{HBSelmer1}. We state the modified statement below.

\begin{lemma}[{\cite[Lemma~5]{HBSelmer1}}]\label{lemma:largesieve}
Suppose $\mathbf{u}$ and $\mathbf{v}$ are linked indices.
Take $\mathcal{C}_1$ to be any collection of $D_{\mathbf{u}}\geq (\log N)^{544}$.
Take $\mathcal{C}_2$ to be any collection of $D_{\mathbf{v}}\geq (\log N)^{544}$.
Take $\mathcal{C}_3$ to be any collection of $\mathbf{B}=(D_{\mathbf{w}})_{\mathbf{w}\neq \mathbf{u},\mathbf{v}}$.
Then
\[\sum_{D_{\mathbf{u}}\in\mathcal{C}_1}
\sum_{D_{\mathbf{v}}\in\mathcal{C}_2}
\sum_{\substack{\mathbf{B}\in\mathcal{C}\\ \prod D_{ij}\in\D_N}}
g_{\eta}(\mathbf{D})\ll N(\log N)^{-17}.\]
\end{lemma}

Suppose $\mathbf{u}$ and $\mathbf{v}$ are linked.
When one of $D_{\mathbf{u}}$ is large, and $D_{\mathbf{v}}\neq 1$ is small, Siegel--Walfisz for character sums (\cite[Lemma~6]{HBSelmer1}) is used instead. For this we need that the sum over $D_{\mathbf{u}}$ is over an interval, but we can still impose restrictions on $D_{\mathbf{v}}$ and $(D_{\mathbf{w}})_{\mathbf{w}\neq \mathbf{u},\mathbf{v}}$ without affecting the estimates.

\begin{lemma}[{\cite[Lemma~7]{HBSelmer1}}]\label{lemma:SW}
Suppose $\mathbf{u}$ and $\mathbf{v}$ are linked indices.
Take $\mathcal{C}_2$ to be any collection of $1<D_{\mathbf{v}}< (\log N)^{544}$.
Take $\mathcal{C}_3$ to be any collection of $\mathbf{B}=(D_{\mathbf{w}})_{\mathbf{w}\neq \mathbf{u},\mathbf{v}}$.
Define
\[S_{1,\eta}(A_{\mathbf{u}})\coloneqq
\sum_{A_{\mathbf{u}}<D_{\mathbf{u}}\leq 2A_{\mathbf{u}}}
\sum_{D_{\mathbf{v}}\in\mathcal{C}_2}
\sum_{\substack{\mathbf{B}\in\mathcal{C}_3\\ \prod D_{ij}\in\D_N}}
g_{\eta}(\mathbf{D}).\]
Then
\[\sum_{A_{\mathbf{u}}}|S_{1,\eta}(A_{\mathbf{u}})|\ll N(\log N)^{-17},\]
where the sum is over all $A_{\mathbf{u}}\geq (N^{\ddagger})^{\frac{1}{4}}$ that are powers $2$.
\end{lemma}

Combining Lemma~\ref{lemma:largesieve} and Lemma~\ref{lemma:SW} we have the following.
\begin{lemma}\label{lemma:Scombined}
Suppose $\mathbf{u}$ and $\mathbf{v}$ are linked indices.
Take $\mathcal{C}_2$ to be any collection of $D_{\mathbf{v}}>1$.
Take $\mathcal{C}_3$ to be any collection of $\mathbf{B}=(D_{\mathbf{w}})_{\mathbf{w}\neq \mathbf{u},\mathbf{v}}$.
Define
\[S_{1,\eta}(A_{\mathbf{u}})\coloneqq
\sum_{A_{\mathbf{u}}<D_{\mathbf{u}}\leq 2A_{\mathbf{u}}}
\sum_{D_{\mathbf{v}}\in\mathcal{C}_2}
\sum_{\substack{\mathbf{B}\in\mathcal{C}_3\\ \prod D_{ij}\in\D_N}}
g_{\eta}(\mathbf{D}).\]
Then
\[\sum_{A_{\mathbf{u}}}|S_{1,\eta}(A_{\mathbf{u}})|\ll N(\log N)^{-17},\]
where the sum is over all $A_{\mathbf{u}}\geq (N^{\ddagger})^{\frac{1}{4}}$ that are powers $2$.
\end{lemma}

We can now return to the set $\mathcal{U}=\{1i,2j,3k,4l\}$. 
Since by construction $D_{\mathbf{u}}>(N^{\ddagger})^{\frac{1}{4}}$ for all $\mathbf{u}\in\mathcal{U}$, we can assume that the indices $1i,2j,3k,4l$ are pairwise unlinked by Lemma~\ref{lemma:largesieve}.

Now suppose $\mathbf{v}\notin\mathcal{U}$. If $\mathbf{v}$ is not linked any one of $1i,2j,3k$, then $\{1i,2j,3k,\mathbf{v}\}$ is a set of unlinked indices.
Comparing against the $24$ possible sets of unlinked indices in~\cite[Lemma~9]{HBSelmer1}, if $\{1i,2j,3k,4l\}$ and $\{1i,2j,3k,\mathbf{v}\}$ are both sets of unlinked indices, they must be the same set.
Therefore $\mathbf{v}$ must be linked to one of $\{1i,2j,3k\}$, and this allows us to apply Lemma~\ref{lemma:Scombined}, so we are left with the terms in the sum~\eqref{eq:expectedprime} with $D_{\mathbf{v}}=1$ for all $\mathbf{v}\notin\mathcal{U}$.
Noting that there are only a maximum of 24 possible sets of unlinked indices $\mathcal{U}$, and putting in $D_1=D_{1i}$, $D_2=D_{2j}$, $D_3=D_{3k}$, $D_4=D_{4l}$, the sum~\eqref{eq:expectedprime} is bounded by 
\begin{align*}
&\ll\sum_{\substack{D_1D_2D_3D_4\in\D_N\\D_1,D_2,D_3,D_4\geq N^{\ddagger}\\ p\mid D_4 \Rightarrow p\notin S}}4^{-\omega(D_1D_2D_3D_4)}+O\left(N(\log N)^{-\frac{1}{4}+\epsilon}\right)
\\
&\leq\sum_{D\in\D_N}4^{-\omega(D)}\sum_{\substack{(D_1,D_2,D_3,D_4)\\D_1D_2D_3D_4=D\\ p\mid D_4 \Rightarrow p\notin S}}1
+O\left(N(\log N)^{-\frac{1}{4}+\epsilon}\right)\\
&
=\sum_{D\in\D_N} \left(\frac{3}{4}\right)^{\omega_S(D)}+O\left(N(\log N)^{-\frac{1}{4}+\epsilon}\right).
\end{align*}
To bound the main term here we make use of the following result.
\begin{lemma}[{\cite[Theorem~1]{Shiu}}]\label{lemma:Shiu}
Fix $0<\epsilon<1$ and some positive constant $C$.
Let $f$ be a multiplicative function such that $f(p^{\ell})\leq C $ for all prime $p$ and $\ell\geq 1$.
Then
\[\sum_{x-y<n\leq x}f(n)\ll
\frac{y}{\log x}\exp\left(\sum_{p\leq x}\frac{f(p)}{p}\right)
\]
uniformly for
$2\leq X^{1-\epsilon}\leq Y< X$.
\end{lemma}
By Lemma~\ref{lemma:Shiu} and Mertens' theorem, 
\[
\sum_{\substack{D_1D_2D_3D_4\in\D_N\\D_1,D_2,D_3,D_4\geq N^{\ddagger}\\ p\mid D_4 \Rightarrow p\notin S}}G_{\eta}(D_1,D_2,D_3,D_4)\ll\sum_{D\in\D_N} \left(\frac{3}{4}\right)^{\omega_S(D)}\ll N(\log N)^{-\frac{1}{4}+\epsilon}
\]
This gives the following estimate.
\begin{lemma}\label{lemma:w2}For each $\eta\in\{0,1,2,3,4\}$, we have
\[ \#\{(2^{\delta_1(\eta)}D_1,2^{\delta_2(\eta)}D_2,2^{\delta_3(\eta)}D_3,2^{\delta_4(\eta)}D_4)\in\mathcal{W}_D:\ref{cond2w}\text{ holds}
 \} \ll N(\log N)^{-\frac{1}{4}+\epsilon}.\]
\end{lemma}

Combining Lemma~\ref{lemma:w1} and Lemma~\ref{lemma:w2} proves Theorem~\ref{thm:cond}.

\section{Contribution from non-generic $2$-Selmer elements} \label{section:nongen}

Take $\mathcal{G}_N$ to be the collection of $D\in\D_N$ that satisfies one of the following
\begin{enumerate}[label=(\textbf{P\arabic*})]
 \item $\omega(D)\geq 2\log\log N$, \label{EK}
 \item $D< \exp(3(\log N)^{1-2\epsilon})$, \label{size}
 \item at least one of the conditions~\ref{cond1} and~\ref{cond2} fails. \label{Sfail}
\end{enumerate}

\begin{lemma}\label{lemma:nongen}
We have
\[\sum_{D\in \mathcal{G}_N}\#\E_D(\Z)\ll N(\log N)^{-\frac{1}{4}+2\epsilon}.\]
\end{lemma}
\begin{proof}
By the Erd\H{o}s-{K}ac theorem~\cite{EdosKac}, the numbers of $D\in D_N$ from~\ref{EK} is bounded by
\[
\#\left\{D\in \D_N :\omega(D)\geq 2\log\log N\right\}\ll N(\log N)^{-\frac{1}{2}}.
\]
The number of $D\in\D_N$ that satisfies~\ref{size} is trivially bounded by
$\exp(3(\log N)^{1-\epsilon})$.
Theorem~\ref{thm:cond} bounds the number of $D\in\D_N$ that satisfies~\ref{Sfail}.
Therefore 
\[\#\mathcal{G}_N\ll N(\log N)^{-\frac{1}{4}+\epsilon}.\]

We now bound the contribution from the curves in $\mathcal{G}_N$ to the total number of integral points.
Applying~\eqref{eq:intmoments} with $k=1/\epsilon$, we have
\[
\sum_{D\in\D_N}(\#\E_D(\Z))^{\frac{1}{\epsilon}}\ll_{\epsilon} N.
\]
Using Hölder's inequality, 
\[
\sum_{D\in \mathcal{G}_N}\#\E_D(\Z)
\leq
\Big(\sum_{D\in \D_N}(\#\E_D(\Z))^{\frac{1}{\epsilon}}\Big)^{\epsilon}
(\#\mathcal{G}_N)^{1-\epsilon}
\ll_{\epsilon} N^{\epsilon}(\#\mathcal{G}_N)^{1-\epsilon}.
\]
The claims follows from putting $\#\mathcal{G}_N\ll N(\log N)^{-\frac{1}{4}+\epsilon}$.
\end{proof}

\section{Counting generic points}\label{sec:count}
By Lemma~\ref{lemma:nongen}, we may exclude any $D\in\mathcal{G}_N$.
Any integral point in $\E_D(\Z)$ maps to $2$-Selmer element, and hence to $\mathcal{W}_D$ under the map
\[\E_D(\Z)\hookrightarrow\E_D(\Q)\twoheadrightarrow\E_D(\Q)/2\E_D(\Q)\hookrightarrow\Sel_2(\E_D)\twoheadrightarrow\mathcal{W}_D.\]
Recall that 
\[ \E_D^*(\Z)= \E_D(\Z)\setminus\{ (0,0),(\pm D,0)\}.\]
For the non-trivial integral points that has image of the type~\ref{cond2}, we have the following bound from~\cite[Theorem~1.4]{Cintegral} and the discussion after~\cite[Theorem~10.1]{Cintegral}.
\begin{lemma}\label{lemma:torspoint}
We have
\[\sum_{D\in\D_N}\sum_{T\in\{O,(0,0),(\pm D,0)\}}\#(\E_D^*(\Z)\cap (T+2\E_D(\Q)))\ll\sqrt{N}\log N.\]
\end{lemma}
Therefore it remains to handle the integral points that arise from~\ref{cond1}.
Define
\[\mathcal{Z}_N\coloneqq \bigcup_{D\in\D_N\setminus\mathcal{G}_N}\{P\in \E_D(\Z): P\notin 2\E_D(\Q)+\{O,(0,0),(\pm D,0)\}\}.
\]
Then the image of $(x,y)\in \mathcal{Z}_N$ corresponds to $(D_1,D_2,D_3,D_4)\in\mathcal{W}_D$ of the type~\ref{cond1}. By Lemma~\ref{lemma:identifyS2}, \[D/\gcd(x,D)\in\{D_1,D_2,D_3,D_4\}.\] This implies that we can find a prime factor $M$ of $D/\gcd(x,D)$ of size 
\begin{equation}\label{eq:Mrange}
\exp((\log N)^{2\epsilon})<M<\exp((\log N)^{1-2\epsilon}).
\end{equation}
Now $M$ divides $D$ but does not divide $x$.
Therefore we can carry out the transformation in Lemma~\ref{lemma:reddisc} with this $M$. 
For each $P\in\mathcal{Z}_N$, fix a choice of integers $M$ and $k$ such that $M$ satisfies~\eqref{eq:Mrange} and $k^2\equiv x(P)\bmod M$. Define
\[\Phi:\mathcal{Z}_N\rightarrow V/\GL_2(\Z)\] 
by
\[P\mapsto(P,M,k)\xmapsto{\Psi}(F,(1,0))\mapsto F,\]
where $\Psi$ is the map defined in~\ref{eq:Psi}.
For any $P\in \E_D(\Z)\cap S_N$, write 
\[\tilde{D}=\frac{D}{M},\]
so $\Delta(F)=(2\tilde{D})^6$ if $F=\Phi(P)$.
Since $D\geq \exp(3(\log N)^{1-2\epsilon})$ by~\ref{size} and $M$ is in the range~\eqref{eq:Mrange}, we have 
\begin{equation}\label{eq:tDrange}
 \exp(2(\log N)^{1-2\epsilon})<D\exp(-(\log N)^{1-2\epsilon})\leq \tilde{D}<D\exp(-(\log N)^{2\epsilon}).
\end{equation}

\subsection{Points lowered to the same quartic}
 We now show that each binary quartic form in the image of $\Phi$ cannot arise from too many integral points.
\begin{lemma}
For any $F\in\im\Phi$, we have
\[\#\Phi^{-1}(F)\ll 1,\]
where the implied constant is independent of $F$.
\end{lemma}

\begin{proof}

Lemma~\ref{lemma:injective} implies that $\Psi$ is injective.
Fix some $F_0\in\im \Phi$. Suppose $(F,(1,0))\in\im\Psi$ is such that $F$ and $F_0$ are $\GL_2(\Z)$-equivalent, so we can write 
\[F_0(X,Y)=F((X,Y)\cdot \gamma)\] for some $\gamma\in\GL_2(\Z)$. Then 
\[\gamma\cdot (F(X,Y),(1,0))=(F((X,Y)\cdot \gamma),(1,0)\cdot \gamma^{-1})
=(F_0(X,Y),(1,0)\cdot \gamma^{-1}).\]
This gives a solution $(x,y)=(1,0)\cdot \gamma^{-1}$ to the Thue inequality 
\begin{equation}\label{eq:Thueint}
 1 \leq |F_0(x,y)|\leq h,
\end{equation}
where
$h\coloneqq\exp((\log N)^{1-2\epsilon})$ is taken so that $h$ is greater than any $M$ in~\eqref{eq:Mrange}. 
In particular this solution is primitive ($x$ and $y$ coprime), since $\gamma^{-1}\in\GL_2(\Z)$ has determinant $\pm 1$ and entries in $\Z$.
The solutions to the Thue inequality constructed from different elements $(F,(1,0))\in\im\Psi$ are distinct as long as $(F,(1,0))$ are from different $\GL_2(\Z)$-equivalence classes. Indeed, suppose $F_0(X,Y)=F_1((X,Y)\cdot \gamma_1)$ and $F_0(X,Y)=F_2((X,Y)\cdot \gamma_2)$, then if the solutions produced on~\eqref{eq:Thueint} are same, we also have $(1,0)\cdot \gamma_1^{-1}=(1,0)\cdot \gamma_2^{-1}$, so $\gamma_1\cdot (F_1(X,Y),(1,0))=\gamma_2\cdot (F_2(X,Y),(1,0))$.

A result by Evertse 
\cite[Theorem~6.3]{Evertse} implies that when $2^8\Delta(F_0)\geq (13h)^{10}$, the number of solutions to~\eqref{eq:Thueint} is bounded by some absolute constant. Since $\Delta(F_0)=(2\tilde{D})^6\gg \exp(12(\log N)^{1-2\epsilon})$ from~\eqref{eq:tDrange}, and $h^{10}=\exp(10(\log N)^{1-2\epsilon})$, we conclude that the number of possible classes $(F,(1,0))$ associated to each class of $F_0$ is absolutely bounded.
\end{proof}

\subsection{Integral points with bounded height}

Every integral binary quartic form is $\SL_2(\Z)$-equivalent to at least one reduced form~\cite{Cremona}. The seminvariant $a,H$ of the reduced form are bounded in terms of $I$ and $J$. We restate a theorem in~\cite{Cremona} in terms of our rescaled seminvariants. The scale factors of the seminvariants can be found in Section~\ref{sec:quartics}.

\begin{theorem}[{\cite[Proposition 11]{Cremona}}]\label{theorem:rangered}
Suppose $F_0(X,Y)\in\Z[X,Y]$ is a $\GL_2(\Z)$-reduced quartic form, and $\Delta(F_0)>0$, with leading coefficient $a$ and seminvariant $H$.
Order the three real roots $\phi_1,\phi_2,\phi_3$ of $X^3-\frac{I}{4}X-\frac{J}{4}$ so that
$a\phi_1<a\phi_2<a\phi_3$.
Then $(a,H)$ satisfies one of the following:
\begin{enumerate}
\item $|a|\leq \frac{4}{3}|\phi_1-\phi_3|$ and $\max\{a\phi_1,\ a\phi_3-4\phi_3^2+\frac{1}{3}I\}\leq H\leq a\phi_2$; or
\item $ |a|\leq \frac{4}{3}|\phi_1-\phi_2|$ and 
 $a\phi_3\leq H\leq a\phi_2-4\phi_2^2+\frac{1}{3}I$.
\end{enumerate}

\end{theorem}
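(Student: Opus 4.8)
The plan is to derive the stated bounds from the reduction theory of real binary quartic forms of positive discriminant, following Cremona. Recall how ``reduced'' is defined for such a form $g$: one attaches to $g$ a canonical positive definite quadratic covariant $q_g = uX^2 + vXY + wY^2$ (the Julia quadratic, built from the roots of $g$ --- for the four-real-root case as a weighted sum of squares of the linear factors, for the no-real-root case from the two conjugate-pair quadratic factors), hence a covariant point $z_g$ in the upper half-plane $\mathcal{H}$ satisfying $z_{\gamma\cdot g} = \gamma\, z_g$; one then declares $g$ reduced when $z_g$ lies in the standard fundamental domain $\mathcal{F} = \{z : |z|\ge 1,\ |\mathrm{Re}\,z|\le \tfrac12\}$, equivalently when $q_g$ is Gauss-reduced, $|v|\le u\le w$. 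Since Theorem~\ref{theorem:rangered} is a one-directional implication (reduced $\Rightarrow$ bounds on $(a,H)$), it suffices to express $q_g$ --- equivalently $\mathrm{Re}\,z_g = -v/(2u)$ and $|z_g|^2 = w/u$ --- explicitly in terms of the seminvariants and invariants and then impose $|\mathrm{Re}\,z_g|\le \tfrac12$ and $|z_g|\ge 1$.

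The first real step is the explicit formula for $q_g$ in terms of $a, H, I, J$. The lever here is the syzygy~\eqref{eq:syzygy}: dividing it by $a^3$ (when $a\ne 0$) gives $p(H/a) = R^2/(4a^3)$ for the resolvent cubic $p(X) = X^3 - \tfrac{I}{4}X - \tfrac{J}{4}$, so $H/a$ equals a root $\phi_i$ exactly when $R = 0$, and in general the position of $H/a$ relative to the three real roots $\phi_1,\phi_2,\phi_3$, together with the sign of $a$, is what must be tracked. Writing the covariant point in coordinates adapted to the $\phi_i$, one obtains $\mathrm{Re}\,z_g$ and $|z_g|^2$ as explicit rational functions of $a$, of the differences $H - a\phi_i$, and of the $\phi_i$. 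The quantities $a\phi_i$ and $a\phi_i - 4\phi_i^2 + \tfrac13 I$ appearing in the statement are the images of the vertical lines $\mathrm{Re}\,z = \pm\tfrac12$ and of the unit circle $|z| = 1$ under this coordinate change; in particular $-4\phi_i^2 + \tfrac13 I$ is the quadratic correction that the linear quantity $a\phi_i$ acquires under a unit horizontal translation $z\mapsto z\mp 1$.

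The second step substitutes the two inequalities. The bound $|\mathrm{Re}\,z_g|\le \tfrac12$ sandwiches $H$ between two of the lines $a\phi_i$ and their translates, producing either $\max\{a\phi_1, a\phi_3 - 4\phi_3^2 + \tfrac13 I\}\le H\le a\phi_2$ or $a\phi_3\le H\le a\phi_2 - 4\phi_2^2 + \tfrac13 I$; the bound $|z_g|\ge 1$, combined with the discriminant relation, bounds the leading coefficient and yields $|a|\le \tfrac43|\phi_1 - \phi_3|$ in the first case and $|a|\le\tfrac43|\phi_1-\phi_2|$ in the second (with $\tfrac43$ the familiar constant from Gauss reduction). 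Ordering the $\phi_i$ by the sign-sensitive condition $a\phi_1 < a\phi_2 < a\phi_3$ is exactly what merges the $a>0$ and $a<0$ analyses, and the dichotomy between (1) and (2) records which of the two sub-regions of the preimage of $\mathcal{F}$ the point $z_g$ lies in --- concretely, on which side of $\phi_2$ the ratio $H/a$ falls. One closes by checking the boundary cases of $\mathcal{F}$ and the consistency of the two branches.

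I expect the four-real-root case to be where the work lies. There the Julia quadratic depends on a choice (of which pairing of the roots it is built around, and of the positive weights), and one must show both that the reduction-theoretic choice is the one giving the inequalities above and that, in the resulting coordinates, the relevant loci are exactly $H = a\phi_i$ and $H = a\phi_i - 4\phi_i^2 + \tfrac13 I$ rather than merely these up to sign or up to the $\SL_2(\mathbb{Z})$-action. Tracking the signs of $a$ and of the individual $\phi_i$ through the coordinate change, and matching the two halves of $\mathcal{F}$ with cases~(1) and~(2), is the delicate bookkeeping; by contrast the no-real-root sub-case is comparatively routine, the covariant conic there being essentially canonical and the same substitution going through in the same way.
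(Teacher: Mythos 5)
The paper does not actually prove Theorem~\ref{theorem:rangered}: it is quoted from \cite[Proposition~11]{Cremona} and merely restated with the rescaled seminvariants $a,H,I,J$ of Section~\ref{sec:quartics}, so the only proof to compare against is Cremona's own. Your outline follows the same general route as that argument --- ``reduced'' means the covariant point $z_g$ of an associated positive definite quadratic lies in the standard fundamental domain, and one converts $|\mathrm{Re}\,z_g|\le\tfrac12$ and $|z_g|\ge1$ into inequalities among $a$, $H$ and the resolvent roots $\phi_i$, with the syzygy~\eqref{eq:syzygy} governing the position of $H/a$ relative to the $\phi_i$ --- so the strategy is the right one.

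As written, however, this is a plan rather than a proof: every quantitative step is asserted, not derived. You never write down the covariant quadratic explicitly in terms of $(a,H,R,\phi_i)$; you never verify that the boundary loci are exactly $H=a\phi_i$ and $H=a\phi_i-4\phi_i^2+\tfrac13 I$ (rather than these ``up to sign or up to the action'', a worry you yourself raise); you never derive the constant $\tfrac43$ nor establish which of the branches (1), (2) corresponds to which part of the fundamental domain. You also leave unresolved which covariant defines ``reduced'' in the four-real-root case (Julia's weighted sum of squares of linear factors versus the resolvent-based quadratic Cremona actually uses, which is what makes $\phi$ appear in the bounds); if your notion of reduced is not the one in \cite{Cremona}, the stated inequalities do not follow without first reconciling the two definitions. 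Finally, $\Delta(g)>0$ also covers quartics with no real roots, and that subcase is waved through as ``routine'' without the corresponding computation, and the compatibility of the constants with the rescaled seminvariants is never checked. Since the entire content of the proposition lies precisely in these computations (or in citing \cite[Proposition~11]{Cremona} after checking the rescaling), the proposal has a genuine gap.
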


For $\im\Phi$, $\Delta(F)=(2\tilde{D})^6>0$. Also $I(F)=4\tilde{D}^2$ and $J(F)=0$, so $\{\phi_1,\phi_3\}=\{-\tilde{D},\tilde{D}\}$ and $\phi_2=0$.
Suppose $F_0$ is a reduced form of $F$ with leading coefficient $a$ and seminvariant $H$. Then the two possible cases in Lemma~\ref{theorem:rangered} both lead to 
\begin{equation}\label{eq:aHbounds}
 |a|\leq \frac{8}{3}\tilde{D}\qquad\text{ and }\qquad|H|\leq \frac{4}{3}\tilde{D}^2.
\end{equation}
The syzygy in~\eqref{eq:syzygy} for $F_0$ now takes the form
\[H^3-(a\tilde{D})^2H=\left(\frac{1}{2}R\right)^2.\]
Notice that this gives an integral point $(H,\frac{1}{2}R)\in \E_{|a\tilde{D}|}(\Z)$ when $a\neq 0$. Below we show that the possibility that $a= 0$ does not happen because we restricted our counting to $\mathcal{Z}_N$.
\begin{lemma}\label{lemma:anonzero}
Suppose $F\in\im\Phi$. Then any form in the $\SL_2(\Z)$-equivalence class of $F$ has non-zero leading coefficient. 
\end{lemma}
\begin{proof}
Assume for contradiction that $\Phi(P)=F$ for some $P=(c,d)\in\mathcal{Z}_N$ and $F$ is equivalent to some quartic form with leading coefficient $0$. Then there is a non-trivial integral solution to $F(X,Y)=0$.
From $\Phi(P)=F$, we know that $F(X,Y)=\frac{1}{M^3}f_P(MX+kY,Y)$ for some $M,k\in\Z$, so $f_P(X,Y)=0$ also has a non-trivial solution, say $(x_0,y_0)$.
Then from the expression of $f_P$ in~\eqref{eq:fpdef},
\[
f_P(x_0,y_0)=x_0^4-6cx_0^2y_0^2+8dx_0y_0^3+(4D^2-3c^2)y_0^4=0.
\]
We see that $y_0\neq 0$ since the solution is non-trivial.
The roots of $f_P(X,1)$ are
\begin{align*}
 \frac{x_0}{y_0}& =-\sqrt{c}+\sqrt{c+D}+\sqrt{c-D},\quad
-\sqrt{c}-\sqrt{c+D}-\sqrt{c-D}, \\
&\quad\qquad \sqrt{c}+\sqrt{c+D}-\sqrt{c-D},\quad
\sqrt{c}-\sqrt{c+D}+\sqrt{c-D}.
\end{align*}
For $x_0/y_0$ to be rational, it must be that $\theta(P)=(1,1,1)$, where $\theta$ is the $2$-descent homomorphism defined in~\eqref{eq:theta}. This implies that $P\in 2\E_D(\Q)$, but such points are not in $\mathcal{Z}_N$.
\end{proof}

The $\SL_2(\Z)$-equivalence class of $F_0$ is determined by $(a,\tilde{D},H,R)$ and $R$ is fixed by $(a,\tilde{D},H)$ up to $\pm $ sign, so it suffices to count the number of $(a,\tilde{D},H)$ that arise from this reduction, with the bounds~\eqref{eq:aHbounds}.

\subsection{Torsion points}
We have reduced the problem to counting integral points with bounded height on $\E_{|a\tilde{D}|}$, but since there are $2$-torsion points on every curve, we need to deal with this possibility separately.

Here we bound those $F\in\im\Phi$ that reduces to a form whose syzygy~\eqref{eq:syzygy} produces a torsion point on some $\E_{|a\tilde{D}|}$.
From~\eqref{eq:tDrange},
\[\tilde{D}\leq N\exp(-(\log N)^{2\epsilon}).\]
\begin{lemma}\label{lemma:removetorsion}
Let $\tilde{N}=N\exp(-(\log N)^{2\epsilon})$.
The total number of $\SL_2(\Z)$-equivalence classes of integer-matrix binary forms $F$ that satisfy $H(F)\in\{-a(F)\tilde{D},0,a(F)\tilde{D}\}$, $I(F)=(2\tilde{D})^2$ and $J(F)=0$ for some $\tilde{D}\in\D_{\tilde{N}}$,
is bounded by
\[\ll N\exp(-(\log N)^{\epsilon}).\]
\end{lemma}
\begin{proof}
Suppose that 
$F(X,Y)=a_0X^4+4a_1X^3Y+6a_2X^2Y^2+4a_3XY^3+a_4Y^4$.
Since $H(F)=a_1^2-a_0a_2$, if $H(F)\in\{-a_0\tilde{D},0,a_0\tilde{D}\}$, it must be that $a_0\mid a_1^2$.
Then $a_0\mid \Delta(F)=(2\tilde{D})^6$ by the discriminant formula. Therefore for each $\tilde{D}$, there can only be a maximum of $7^{\omega(2\tilde{D})}$ possible $a_0$.
Sum over $\tilde{D}\in\D_{\tilde{N}}$, and apply Lemma~\ref{lemma:Shiu}, we can bounde the number of classes 
\[\ll \sum_{\tilde{D}\leq \tilde{N}}7^{\omega(\tilde{D})}\ll \tilde{N}(\log \tilde{N})^{6}.\]
Put in $\tilde{N}=N\exp(-(\log N)^{2\epsilon})$ proves Lemma~\ref{lemma:removetorsion}.
\end{proof}

\subsection{Non-torsion points}
We now bound those $F\in\im\Phi$ that reduces to a form whose syzygy~\eqref{eq:syzygy} produces a non-torsion point on some $\E_{|a\tilde{D}|}$ with~\eqref{eq:aHbounds}.
Since the $\E_D$ that satisfies~\ref{EK} have been removed, we can assume that $\omega(\tilde{D})<\omega(D)<2\log\log N$.
Also by Lemma~\ref{lemma:anonzero}, $a\neq 0$.

\begin{lemma}\label{lemma:bdheight}
Let $\tilde{N}=N\exp(-(\log N)^{2\epsilon})$. Then
\[ \sum_{1\leq|a|\leq \frac{8}{3}\tilde{N}}\sum_{\substack{\tilde{D}\in\D_{\tilde{N}}\\\omega(\tilde{D})<2\log\log N}}\#\left\{\left(H,\frac{1}{2}R\right)\in \E_{|a\tilde{D}|}^*(\Z):|H|\leq \frac{4}{3}\tilde{D}^2\right\}\ll N\exp(-(\log N)^{\epsilon}) .\]
\end{lemma}

\begin{proof}
Write $n=|a\tilde{D}|\leq \frac{8}{3}\tilde{N}^2$. 
For each positive integer $n$, the number of ways to factorise $n$ into a product of $a$ and $\tilde{D}$ such that $\tilde{D}$ is squarefree and $\omega(\tilde{D})<2\log\log N$, is bounded by
\begin{equation}\label{eq:factor}
\sum_{k\leq\min\{\omega(n),2 \log\log N\}}\binom{\omega(n)}{k}<
\sum_{k\leq 2\log\log N} \left(\omega(n)\right)^k
\ll e^{3(\log \log N)^2},
\end{equation} 
where we have used the fact that $\omega(n)\ll \log N$.

The number of integral points we are counting are of bounded height $|x(P)|\leq \frac{4}{3}\tilde{N}^2$, so
applying a result by Le Boudec~\cite[Theorem~2]{LeBoudec} we get
\begin{equation}\label{eq:numberofpts}
\sum_{n\geq 1}\#\left\{P\in \E_{n}^*(\Z): |x(P)|\leq \frac{4}{3}\tilde{N}^2\right\} \ll \tilde{N}(\log \tilde{N})^6.
\end{equation}
Now multiplying~\eqref{eq:factor} and~\eqref{eq:numberofpts}, we get that the total number of triples $(H,a,\tilde{D})$ is \[\ll N\exp(-(\log N)^{2\epsilon}+4(\log \log N)^2) .\]
This proves the claim.
\end{proof}

Theorem~\ref{theorem:main} follows from Lemma~\ref{lemma:nongen}, Lemma~\ref{lemma:torspoint}, Lemma~\ref{lemma:removetorsion} and Lemma~\ref{lemma:bdheight}.

\end{document}